\newcommand{\co}{\colon\thinspace}
\begin{document}

\newtheorem{theorem}{Theorem}[section]
\newtheorem{conj}[theorem]{Conjecture}
\newtheorem{lem}[theorem]{Lemma}
\newtheorem{corollary}[theorem]{Corollary}
\newtheorem{proposition}[theorem]{Proposition}
\newtheorem{rem}[theorem]{Remark}

\theoremstyle{definition}
\newtheorem{mydefinition}[theorem]{Definition}
\newtheorem{myexample}[theorem]{Example}
\newtheorem{construction}[theorem]{Construction}
\newtheorem{notation}[theorem]{Notation}
\newtheorem{myremark}[theorem]{Remark}

\theoremstyle{remark}

\makeatletter
\renewcommand{\maketag@@@}[1]{\hbox{\m@th\normalsize\normalfont#1}}%
\makeatother

\renewcommand{\labelenumi}{(\roman{enumi})}
\renewcommand{\labelenumii}{(\alph{enumii})}

\renewcommand{\theenumi}{(\roman{enumi})}
\renewcommand{\theenumii}{(\alph{enumii})}

\def\square{\hfill ${\vcenter{\vbox{\hrule height.4pt \hbox{\vrule width.4pt
height7pt \kern7pt \vrule width.4pt} \hrule height.4pt}}}$}

\newenvironment{pf}{{\it Proof:}\quad}{\square \vskip 12pt}

\title[Permutations and Duality]{Incorporating Voice Permutations into the Theory of Neo-Riemannian Groups and Lewinian Duality}
\author[Fiore, Noll, Satyendra]{Thomas M. Fiore, Thomas Noll, and Ramon Satyendra}
\address{Thomas M. Fiore \\ Department of Mathematics and Statistics\\
University of Michigan-Dearborn \\ 4901 Evergreen Road \\ Dearborn,
MI 48128 \\ U.S.A.} \email{tmfiore@umich.edu}
\urladdr{http://www-personal.umd.umich.edu/~tmfiore/}
\address{Thomas Noll \\ Escola Superior de M\'{u}sica de Catalunya \\
Departament de Teoria, Composici\'{o} i Direcci\'{o} \\
C. Padilla, 155 - Edifici L'Auditori \\
08013 Barcelona, Spain }
\email{noll@cs.tu-berlin.de}
\urladdr{http://user.cs.tu-berlin.de/~noll/}
\address{Ramon Satyendra \\ School of Music, Theatre and Dance \\
University of Michigan \\ 1100 Baits Drive \\ Ann Arbor,
MI 48109-2085 \\ U.S.A.} \email{ramsat@umich.edu}
\urladdr{http://ramonsatyendra.net/}
\maketitle

\begin{abstract}
A familiar problem in neo-Riemannian theory is that the $P$, $L$, and $R$ operations defined as contextual inversions on pitch-class segments do not produce parsimonious voice leading. We incorporate permutations into $T/I$-$PLR$-duality to resolve this issue and simultaneously broaden the applicability of this duality.  More precisely, we construct the dual group to the permutation group acting on $n$-tuples with distinct entries, and prove that the dual group to permutations adjoined with a group $G$ of invertible affine maps $\mathbb{Z}_{12} \to \mathbb{Z}_{12}$ is the internal direct product of the dual to permutations and the dual to $G$.
Musical examples include Liszt,
R. W. Venezia, S. 201 and Schoenberg, String Quartet Number 1, Opus 7. We also prove that the Fiore--Noll construction of the dual group in the finite case works, and clarify the relationship of permutations with the $\mathrm{RICH}$ transformation.
\\
\\
Keywords: dual group, duality, Lewin, neo-Riemannian group, $PLR$, permutation, RICH, retrograde inversion enchaining
\end{abstract}

\section{Introduction: neo-Riemannian Groups and Voice Leading Parsimony}

In the context of this article we wish to touch a sore spot at the very heart of neo-Riemannian theory. It concerns the remarkable solidarity between voice leading parsimony on the one hand and triadic transformations on the other. How do the two aspects fit together, precisely? The study of voice leading requires the localization of chord tones within an ensemble of voices. The study of triadic transformations, and in particular the investigation of the duality between the $T/I$ and $PLR$-groups, seems either to require an abstraction of the triads from their concrete construction from tones or it leads to a dualistic voice leading behavior, which is in conflict with the principle of voice-leading parsimony (see Figure \ref{fig: CakeCognac}).

\begin{figure}
\centering
\includegraphics[width=11cm]{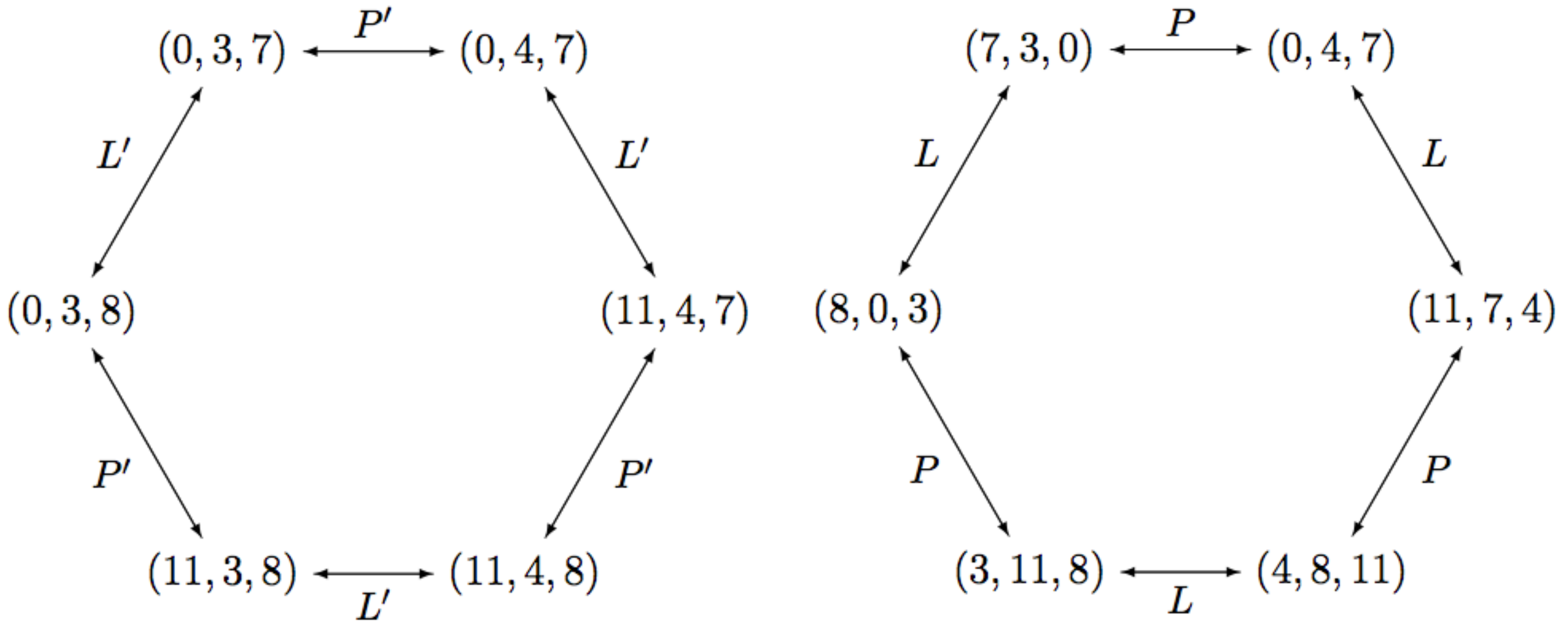}
\caption{Two ``proto-transformational" networks representing different voice-leadings for a Hexatonic Cycle (right: parsimonious voice leading, left: dualistic voice leading).}
\label{fig: CakeCognac}
\end{figure}

In the light of the impact of dialectics upon the development of music theoretical ideas in the writings of Moritz Hauptmann and Hugo Riemann it is remarkable that Nora Engebretsen portraits in \cite{engebretsen} a main line of conceptual development in the second half of the 19th century within the garb of a dialectical triad: \begin{enumerate}
\item Hauptmann's focus on common-tone retention in (diatonic) triadic progressions (Thesis)
\item Von Oettingen's focus on the dualism between major and minor triads (Antithesis)
\item Riemann's attempts to integrate both view points in a chromatic context (Synthesis)
\end{enumerate}

Despite of its historical attractiveness this dialectical metaphor remains euphemistic, until a successful neo-Riemannian synthesis of voice leading and Lewinian transformational theory has been achieved. The present paper takes a step in this direction and, in particular, attributes precise transformational meanings to the arrow labels in the networks of Figure \ref{fig: CakeCognac}.

\section{Construction of the Dual Group in the Finite Case}

In preparation for our treatment of permutations in neo-Riemannian groups, we briefly recall the well-known duality between the $T/I$-group and $PLR$-group, and present a new proof of the Fiore--Noll construction of the dual group in the finite case. The basic objects upon which the $T/I$-group and $PLR$-group act are pitch-class segments with three constituents. Recall that a {\it pitch-class segment} is an ordered subset of $\mathbb{Z}_{12}$, or more generally $\mathbb{Z}_m$. We use parentheses\footnote{We do not use the traditional musical notation $\langle x_1, \dots , x_n \rangle$ for pitch-class segments because it clashes with the mathematical notation for the subgroup generated by $x_1, \dots, x_n$, which we will also need on occasion.} to denote a pitch-class segment as an $n$-tuple $( x_1, \dots , x_n)$.  The sequential order of the pitch classes may, for example, relate to the temporal order of notes in a score, or to the distribution of pitches in different voices in a certain registral order. In connection with recent studies to voice leading, such as \cite{Call_Quinn_Tym}, one may wish to include voice permutations into the investigation of contextual transformations in non-trivial ways, as we do in Section~\ref{sec:Permutations}.

\subsection{Lewinian Duality between the $T/I$-Group and $PLR$-Group} \label{subsec:Duality_TI_PLR}
The $T/I$-group consists of the 24 bijections $T_j, I_j\co \mathbb{Z}_{12} \to \mathbb{Z}_{12}$ with $T_j(k)=k+j$ and $I_j(k)=-k+j$, where $j \in \mathbb{Z}_{12}$. Via its componentwise action on $3$-tuples, this dihedral group acts simply transitively on the set $S$ of all the transposed and inverted forms of the root position $C$-major 3-tuple $(0,4,7)$. Note that the minor triads in $S$ are not in root position, e.g. $a$-minor is $(4,0,9)$. Like any group action, this action corresponds to a homomorphism from the group to the symmetric group on the set upon which it acts, namely a homomorphism $\lambda \co T/I \to \mathrm{Sym}(S)$. The {\it symmetric group on $S$}, denoted $\mathrm{Sym}(S)$, consists of all bijections $S \to S$, while the group homomorphism $\lambda \co T/I \to \mathrm{Sym}(S)$ is $g \mapsto ( s \mapsto gs )$.  Since the action is simply transitive, the homomorphism $\lambda$ is an embedding, and we consider the $T/I$-group as a subgroup of $\mathrm{Sym}(S)$ via this embedding $\lambda$.

The other key character in this by now classical story is the neo-Riemannian $PLR$-group, which is the subgroup of $\mathrm{Sym}(S)$ generated by the bijections $P, L, R \co S \to S$. These transformations, respectively called {\it parallel}, {\it leading-tone exchange}, and {\it relative}, and are given by\footnote{Our usage of ordered $n$-tuples allows these root-free, mathematical formulations of musical operations. See also \cite[Footnote 20]{fioresatyendra2005}.}
\begin{equation} \label{equ:PLR_root_position}
\aligned
P(y_1,y_2,y_3)&:=I_{y_1+y_3}(y_1,y_2,y_3)&=(y_3,-y_2+y_1+y_3,y_1) \phantom{.}\\
L(y_1,y_2,y_3)&:=I_{y_2+y_3}(y_1,y_2,y_3)&=(-y_1+y_2+y_3,y_3,y_2) \phantom{.} \\
R(y_1,y_2,y_3)&:=I_{y_1+y_2}(y_1,y_2,y_3)&=(y_2,y_1,-y_3+y_1+y_2).
\endaligned
\end{equation}
For instance, $P(0,4,7)=(7,3,0)$, $L(0,4,7)=(11,7,4)$, and $R(0,4,7)=(4,0,9)$. These operations are sometimes called {\it contextual inversions} because the inversion in the definition depends on the input.\footnote{For an approach to contextual inversions in terms of indexing functions and a choice of canonical representative, see Kochavi \cite{kochavi}.}  Note that input and output always have two pitch-classes in common, though {\it their positions are reversed}. In Example~\ref{examp:Cohn_group}, we will see how to use permutations to define variants $P', L', R' \co S' \to S'$ which {\it retain the positions of the common tones}, and generate a dihedral group of order 24 we call the {\it Cohn group}. We will also see in Section~\ref{sec:Permutations} how permutations allow us to mathematically extend $P$, $L$, and $R$ to triads in first inversion or second inversion. Note that a naive application of the formulas in \eqref{equ:PLR_root_position} to a major triad not in root position makes no musical sense; for instance, naive application would erroneously show that the parallel of a first inversion $C$-major chord $(4,7,0)$ is $I_{4+0}(4,7,0)=(0,9,4)$, which is an $a$-minor chord.

The main properties of the $PLR$-group were observed by David Lewin: it acts simply transitively on $S$, and it consists precisely of those elements of $\mathrm{Sym}(S)$ which commute with the $T/I$-group. For instance $RT_7(0,4,7)=(11,7,4)=T_7R(0,4,7)$.
\begin{mydefinition}[Dual groups in the sense of Lewin, see page 253 of \cite{LewinGMIT}]
Let $\text{\rm Sym}(S)$ be the symmetric group on the set $S$. Two subgroups $G$ and $H$ of the symmetric group $\text{\rm Sym}(S)$ are {\it dual in the sense of Lewin} if their natural actions on $S$ are simply transitive and each is the centralizer of the other, that is, $$C_{\text{\rm Sym}(S)}(G)=H \;\; \text{ and } \;\; C_{\text{\rm Sym}(S)}(H)=G.$$
\end{mydefinition}
For an exposition of $T/I$-$PLR$-duality, see Crans--Fiore--Satyendra \cite{cransfioresatyendra}, and for its extension to length $n$ pitch-class segments in $\mathbb{Z}_m$ satisfying a tritone condition, see Fiore--Satyendra \cite{fioresatyendra2005}. Childs and Gollin both developed the relevant dihedral groups in the special case of the pitch-class
segment $X=( 0,4,7, 10 )$, i.e., for the set class of
dominant-seventh chords and half-diminished seventh
chords (see \cite{childs} and \cite{gollin}).

\subsection{Construction of the Dual Group in the Finite Case after Fiore--Noll \cite{fiorenoll2011}}  \label{subsection:dual_construction} The dual group for a simply transitive action of a finite group always exists. This was pointed out in \cite{fiorenoll2011}, though not proved there, so we present a proof now. Let $S$ be a general finite set, as opposed to the specific set of pitch-class segments in Section~\ref{subsec:Duality_TI_PLR}.

\begin{proposition}[Construction 2.3 of Fiore--Noll \cite{fiorenoll2011}, Finite Case] \label{prop:dual_construction}
Suppose $G$ is a finite group which acts simply transitively on a finite set $S$. Fix an element $s_0 \in S$ and consider the two embeddings
$$
\begin{array}{c}
\xymatrix{\lambda \co G \ar[r] & \text{\rm Sym}(S)}\\
\xymatrix{ \text{\phantom{$\lambda \co $}} g \ar@{|->}[r] & \Big( s \mapsto gs\Big)}
\end{array}
\hspace{.25cm}
\begin{array}{c}
\xymatrix{\rho \co G \ar[r] & \text{\rm Sym}(S)} \\
\xymatrix@C=1.5pc{\quad \quad \quad \quad g \ar@{|->}[r] & \Big(hs_0 \mapsto hg^{-1}s_0\Big).}
\end{array}$$
Then the images $\lambda(G)$ and $\rho(G)$ are dual groups in $\text{\rm Sym}(S)$. The injection $\rho$ depends on the choice of $s_0$, but the image $\rho(G)$ does not.
\end{proposition}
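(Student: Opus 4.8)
The plan is to verify the constituent assertions in turn: that $\lambda$ and $\rho$ are well-defined injective homomorphisms, that $\rho(G)$ acts simply transitively, that $\lambda(G)$ and $\rho(G)$ centralize each other, that neither centralizer is any larger, and finally that $\rho(G)$ is independent of $s_0$. The foundational observation, used throughout, is that simple transitivity makes the orbit map $G \to S$, $h \mapsto hs_0$, a bijection; in particular every element of $S$ is uniquely of the form $hs_0$, so the assignment $hs_0 \mapsto hg^{-1}s_0$ genuinely defines a function and $\rho$ is well-defined. That $\lambda$ is a homomorphism is immediate from associativity of the action, and that $\rho$ is one follows from the computation $\rho(g_1)\rho(g_2)(hs_0) = \rho(g_1)(hg_2^{-1}s_0) = hg_2^{-1}g_1^{-1}s_0 = h(g_1g_2)^{-1}s_0 = \rho(g_1g_2)(hs_0)$. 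Injectivity of each is a one-line consequence of the triviality of the stabilizer, and simple transitivity of $\rho(G)$ reduces, via the orbit bijection, to the fact that $\rho(g)(hs_0)=h's_0$ has the unique solution $g = h'^{-1}h$.

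The crux is the duality, which I would organize around two claims: the two subgroups commute elementwise, and the centralizer of a transitive group is no larger than expected. For the first, I would compute both $\lambda(g)\rho(g')$ and $\rho(g')\lambda(g)$ on a typical point $hs_0$; using the identities $\lambda(g)(hg'^{-1}s_0) = ghg'^{-1}s_0$ and $\rho(g')(ghs_0) = ghg'^{-1}s_0$, both composites return $ghg'^{-1}s_0$, so $\lambda(G) \subseteq C_{\mathrm{Sym}(S)}(\rho(G))$ and $\rho(G) \subseteq C_{\mathrm{Sym}(S)}(\lambda(G))$.

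The main obstacle, and the only genuinely non-formal step, is the reverse inclusion $C_{\mathrm{Sym}(S)}(\lambda(G)) \subseteq \rho(G)$ together with its mirror image. The key idea is that an element $\sigma$ commuting with the transitive group $\lambda(G)$ is completely pinned down by the single value $\sigma(s_0)$: writing $\sigma(s_0) = h_0 s_0$ and expressing an arbitrary point as $s = hs_0 = \lambda(h)(s_0)$, the commuting relation gives $\sigma(s) = \lambda(h)\sigma(s_0) = hh_0 s_0$, which coincides with $\rho(h_0^{-1})(hs_0) = hh_0 s_0$; hence $\sigma = \rho(h_0^{-1}) \in \rho(G)$. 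The mirror argument, using that $\rho(G)$ is transitive and that $\rho(h^{-1})(s_0) = hs_0$, shows $C_{\mathrm{Sym}(S)}(\rho(G)) \subseteq \lambda(G)$. Combined with the commuting inclusions, this yields the two centralizer equalities, i.e.\ Lewinian duality.

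Finally, independence of $s_0$ comes for free from the duality itself: we have shown $\rho(G) = C_{\mathrm{Sym}(S)}(\lambda(G))$, and the left-regular embedding $\lambda$ makes no reference to $s_0$, so its centralizer, and hence $\rho(G)$, cannot depend on the chosen basepoint even though the individual maps $\rho(g)$ do.
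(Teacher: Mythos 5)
Your proof is correct, and its crux is genuinely different from the paper's. Both arguments begin identically (well-definedness of $\rho$ via the orbit bijection $h \mapsto hs_0$, the commuting computation $\lambda(g)\rho(g')(hs_0) = ghg'^{-1}s_0 = \rho(g')\lambda(g)(hs_0)$, and simple transitivity of both images), but you part ways at the reverse inclusion $C_{\mathrm{Sym}(S)}(\lambda(G)) \subseteq \rho(G)$. The paper handles this by a counting argument that uses finiteness essentially: any element of the centralizer of the transitive group $\lambda(G)$ is determined by its value at one point, so the centralizer acts simply, hence has cardinality at most $\vert S\vert$; since it contains $\rho(G)$, which has cardinality exactly $\vert S \vert = \vert G\vert$, the inclusion is an equality. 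You instead identify each centralizing element $\sigma$ explicitly: writing $\sigma(s_0) = h_0s_0$ and propagating by equivariance, $\sigma(hs_0) = hh_0s_0 = \rho(h_0^{-1})(hs_0)$, so $\sigma = \rho(h_0^{-1})$ outright. Your route buys more than the statement asks for --- it nowhere uses finiteness of $G$ or $S$, so it proves the general (possibly infinite) case of the Fiore--Noll construction, whereas the paper's proof is deliberately tailored to the finite case announced in the proposition's title. The paper's counting argument, on the other hand, is shorter once finiteness is granted and immediately yields the subsequent corollary (the centralizer of any simply transitive subgroup of $\mathrm{Sym}(S)$ again acts simply transitively) as a visible intermediate step. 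A final small point in your favor: you explicitly derive the independence of $\rho(G)$ from the choice of $s_0$ out of the equality $\rho(G) = C_{\mathrm{Sym}(S)}(\lambda(G))$; the paper asserts this independence in the statement but leaves the (same) deduction implicit in its proof.
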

\begin{proof}
If $j,k\in G$, then $\lambda(j)$ and $\rho(k)$ commute because $$\lambda(j)\rho(k)(hs_0)=j(hk^{-1})s_0=(jh)k^{-1}s_0=\rho(k)\lambda(j)(hs_0)$$ for any $h \in G$. Simple transitivity of both $\lambda(G)$ and $\rho(G)$ is fairly clear. Thus, so far we have $\rho(G) \subseteq C_{\mathrm{Sym}(S)}\big(\lambda(G)\big)$ and $\vert\rho(G)\vert=\vert G \vert = \vert S \vert$. Recall from the Orbit-Stabilizer Theorem that a finite group acting on a finite set acts simply if and only if it acts transitively, and in this case the cardinality of the group is the same as the cardinality of the set.

We next claim that the centralizer $C_{\mathrm{Sym}(S)}(\lambda(G))$ acts simply on $S$. If $c, c' \in C_{\mathrm{Sym}(S)}\big(\lambda(G)\big)$
and $cs_1=c's_1$ for some single $s_1 \in S$, then $chs_1=c'hs_1$ for all $h \in G$, which means $c$ and $c'$ are equal as functions on $S$. Thus this centralizer acts simply and $\vert C_{\mathrm{Sym}(S)}\big(\lambda(G)\big) \vert=\vert S \vert$, and consequently the inclusion $\rho(G) \subseteq C_{\mathrm{Sym}(S)}\big(\lambda(G)\big)$ from above is actually an equality. A similarly counting argument shows that
$\lambda(G) = C_{\mathrm{Sym}(S)}\big(\rho(G)\big)$.
\end{proof}

\begin{corollary}
If $S$ is a finite set, and a subgroup $G$ of $\mathrm{Sym}(S)$ acts simply transitively on $S$, then the centralizer of $G$ in $\mathrm{Sym}(S)$ also acts simply transitively.
\end{corollary}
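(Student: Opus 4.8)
The plan is to derive the corollary directly from Proposition~\ref{prop:dual_construction}, observing that when $G$ is itself a subgroup of $\mathrm{Sym}(S)$ acting simply transitively, the abstract hypothesis of the proposition is automatically satisfied and the embedding $\lambda$ recovers $G$ on the nose.

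First I would check that $\lambda(G) = G$ as subgroups of $\mathrm{Sym}(S)$. Since $G$ already sits inside $\mathrm{Sym}(S)$, each $g \in G$ acts on $s \in S$ by literally applying the permutation $g$, so that $gs = g(s)$. Hence the permutation $\lambda(g) = (s \mapsto gs)$ is nothing but $g$ itself, the map $\lambda$ is just the inclusion of $G$ into $\mathrm{Sym}(S)$, and $\lambda(G) = G$.

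Next, fixing any $s_0 \in S$, I would invoke the proposition to produce the dual group $\rho(G)$, which the proposition asserts acts simply transitively on $S$ and satisfies $\rho(G) = C_{\mathrm{Sym}(S)}\big(\lambda(G)\big)$. Combining this equality with the identification $\lambda(G) = G$ from the previous step gives $C_{\mathrm{Sym}(S)}(G) = \rho(G)$. Since $\rho(G)$ acts simply transitively by the proposition, so does the centralizer $C_{\mathrm{Sym}(S)}(G)$, which is exactly the claim.

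The only point requiring any care, and it is a minor one, is the bookkeeping identification $\lambda(G) = G$: one must not conflate the abstract group $G$ appearing in the proposition with its realization as permutations of $S$. Once that is settled there is no genuine obstacle, since the entire content of the corollary is already packaged inside Proposition~\ref{prop:dual_construction}. Indeed, the corollary is essentially a restatement of that proposition which forgets the auxiliary embedding $\rho$ and remembers only the conclusion that the centralizer of a simply transitive $G$ again acts simply transitively.
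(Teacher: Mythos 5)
Your proposal is correct and matches the paper's intent exactly: the paper states this corollary without a separate proof precisely because it is read off from Proposition~\ref{prop:dual_construction} in the way you describe, with $\lambda$ being the inclusion so that $\lambda(G)=G$ and the centralizer $C_{\mathrm{Sym}(S)}(G)=\rho(G)$ inheriting simple transitivity. Your explicit care with the identification $\lambda(G)=G$ is a reasonable bit of bookkeeping that the paper leaves implicit.
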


We will use this construction several times in the following sections to find the dual group for the symmetric group $\Sigma_n$ acting on $n$-tuples and to include permutations into $T/I$-$PLR$-duality.

\section{Permutation Actions}
\label{sec:Permutations}

We now turn to the main theorem of this paper, Theorem~\ref{thm:dual_groups_with_permutations}.
Let $\Sigma_3$ denote the symmetric group on $\{1,2,3\}$. Its coordinate-permuting action on $3$-tuples in $\mathbb{Z}_{12}$
commutes with transposition and inversion. When we consider all transpositions and inversions of all reorderings of $(0,4,7)$, the $T/I$-group and symmetric group $\Sigma_3$ form an internal direct product denoted $\Sigma_3(T/I)$. Theorem~\ref{thm:dual_groups_with_permutations} essentially says in this case that the dual group to $\Sigma_3(T/I)$ is the internal direct product of the dual group to $\Sigma_3$ and the $PLR$-group, where $P$, $L$, and $R$ are defined on a reordering $\sigma(0,4,7)$ by $\sigma P \sigma^{-1}$, $\sigma L \sigma^{-1}$, and $\sigma R \sigma^{-1}$. Theorem~\ref{thm:dual_groups_with_permutations} is formulated more generally for $n$-tuples and any group of invertible affine maps instead of for 3-tuples and $T/I$. The method for constructing dual groups is always Proposition~\ref{prop:dual_construction}.

Of course, everything in this section works just as well for general $\mathbb{Z}_m$ beyond $\mathbb{Z}_{12}$, but we work with $\mathbb{Z}_{12}$ for concreteness.

\subsection{The Standard Permutation Action on $n$-tuples and its Dual Group} \label{subsection:Standard_Permutation_Action}
Let $\Sigma_n$ denote the symmetric group on $\{1,\dots,n\}$. Consider the standard left action of the symmetric group $\Sigma_n$ on all $n$-tuples with $\mathbb{Z}_{12}$ entries,
$$\xymatrix{\Sigma_n\times\left(\mathbb{Z}_{12}\right)^n \ar[r] & \left(\mathbb{Z}_{12}\right)^n}$$
defined\footnote{The inverses must be included because the first inclination to define $\sigma  (y_1, \dots, y_n )=\left(y_{\sigma(1)},\dots, y_{\sigma(n)}\right)$ is not a left action, since we would have $(\sigma \sigma')Y=\sigma'(\sigma Y)$.} by $\sigma  (y_1, \dots, y_n ):=\left(y_{\sigma^{-1}(1)},\dots, y_{\sigma^{-1}(n)}\right)$. Let $X = (x_1,  \dots, x_n)$ denote a particular pitch-class segment with $n$ {\it distinct} pitch classes, and consider its orbit
$$\Sigma_nX=\left\{ \left(x_{\sigma^{-1}(1)}, \dots, x_{\sigma^{-1}(n)}\right) \, | \, \sigma \in \Sigma_n \right\}.$$
This orbit $\Sigma_nX$ consists of all the reorderings of $X$, or
all the {\it permutations} of $X$. The restricted left action on the orbit
$$\xymatrix{\Sigma_n\times \left( \Sigma_nX \right) \ar[r] & \Sigma_nX}$$
is clearly simply transitive, as the components of $X$ are distinct.
Consequently, we have an associated embedding
$$\xymatrix{\lambda\co \Sigma_n\ar[r] &  \text{Sym}\left(\Sigma_nX\right)},$$
the image of which we call $\lambda(\Sigma_n)$.

As in Construction~2.3 of \cite{fiorenoll2011}, recalled in Section~\ref{subsection:dual_construction} above, we now construct the dual group $\rho(\Sigma_n)$ for $\lambda(\Sigma_n)$ in the symmetric group $\text{Sym}(\Sigma_nX)$. The fixed element $s_0$ is $X$. By simple transitivity, any element of $\Sigma_nX$ can be written as $\nu X$ for some unique $\nu \in \Sigma_n$. On the set of $X$-permutations $\Sigma_nX$, we define in terms of the standard left action a second left action
$$\xymatrix{\Sigma_n\times \left( \Sigma_nX \right) \ar[r]^-\cdot & \Sigma_nX}$$
 by $\sigma\cdot(\nu X):=(\nu \sigma^{-1})X$. One can quickly
check from the axioms for the standard left action that $$(\sigma \tau)\cdot(\nu X)=\sigma \cdot \left(\tau \cdot (\nu X)\right)$$
 $$e\cdot (\nu X)=\nu X$$
and that this second left action is simply transitive. This second
left action gives us a second embedding
$$\xymatrix{\rho\co \Sigma_n \ar[r] & \text{Sym}\left(\Sigma_nX\right)},$$
the image of which we call $\rho(\Sigma_n)$. The groups $\lambda(\Sigma_n)$ and $\rho(\Sigma_n)$ commute because
$$\sigma(\nu \tau^{-1})X=(\sigma \nu)\tau^{-1}X$$
for all $\sigma, \nu, \tau \in \Sigma_n$. We have sketched a proof of the following proposition (and by example also some details of Proposition~\ref{prop:dual_construction}).
\begin{proposition}
The order $n!$ groups $\lambda(\Sigma_n)$ and $\rho(\Sigma_n)$ are dual subgroups of $\text{\rm Sym}(\Sigma_nX)$,
which has order $(n!)!$\,.
\end{proposition}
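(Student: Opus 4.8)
The plan is to recognize this proposition as a direct instance of Proposition~\ref{prop:dual_construction}, applied with $G=\Sigma_n$, $S=\Sigma_nX$, and distinguished point $s_0=X$, and then to supply the two small pieces of bookkeeping that this identification requires: the cardinality count and the verification that the $\rho$ defined here via the second left action coincides with the $\rho$ appearing in Proposition~\ref{prop:dual_construction}.

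First I would pin down $|S|$. Since the entries $x_1,\dots,x_n$ of $X$ are distinct, the stabilizer of $X$ under the standard action is trivial, so by the Orbit--Stabilizer Theorem the orbit $\Sigma_nX$ has exactly $|\Sigma_n|=n!$ elements. This is the content of the phrase ``clearly simply transitive'' in the setup, and it immediately gives $|\mathrm{Sym}(\Sigma_nX)|=(n!)!$, the final assertion of the proposition.

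Next I would check that the present $\rho$ is literally the map of Proposition~\ref{prop:dual_construction}. With $s_0=X$, the construction there sends $g\in\Sigma_n$ to the bijection $hX\mapsto hg^{-1}X$; writing $h=\nu$ and $g=\sigma$, this is exactly $\nu X\mapsto(\nu\sigma^{-1})X=\sigma\cdot(\nu X)$, which is the second left action already introduced. Thus no fresh verification is needed: $\lambda$ and $\rho$ are precisely the two embeddings of Proposition~\ref{prop:dual_construction}, both images have order $n!$, both are simply transitive (their defining actions are), and they commute by the computation $\sigma(\nu\tau^{-1})X=(\sigma\nu)\tau^{-1}X$ recorded above. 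Invoking Proposition~\ref{prop:dual_construction} then yields that $\lambda(\Sigma_n)$ and $\rho(\Sigma_n)$ are mutual centralizers in $\mathrm{Sym}(\Sigma_nX)$, hence dual in the sense of Lewin.

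I do not anticipate any genuine obstacle here, since the substantive work is carried by Proposition~\ref{prop:dual_construction}: namely, that two commuting simply transitive subgroups of matching order in $\mathrm{Sym}(S)$ are forced to be the full centralizers of one another by the counting argument $|C_{\mathrm{Sym}(S)}(\lambda(\Sigma_n))|=|S|=n!$. The one point requiring care is the placement of the inverse in the second action $\sigma\cdot(\nu X)=(\nu\sigma^{-1})X$, which is exactly what makes $\cdot$ a bona fide left action and makes $\rho(\Sigma_n)$ commute with $\lambda(\Sigma_n)$ rather than merely sit inside the group they jointly generate; but this has already been handled in the construction, so the proposition follows as a corollary of the preceding discussion.
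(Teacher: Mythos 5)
Your proposal is correct and follows essentially the same route as the paper: the paper likewise constructs $\rho(\Sigma_n)$ ``as in Construction~2.3 of Fiore--Noll'' (Proposition~\ref{prop:dual_construction}) with $s_0=X$, notes simple transitivity from the distinctness of the entries of $X$, verifies the commutation $\sigma(\nu\tau^{-1})X=(\sigma\nu)\tau^{-1}X$, and then regards the proposition as proved by that construction. Your two bookkeeping additions---the Orbit--Stabilizer count giving $|\Sigma_nX|=n!$ and the explicit check that the second left action $\sigma\cdot(\nu X)=(\nu\sigma^{-1})X$ is literally the $\rho$ of Proposition~\ref{prop:dual_construction}---are exactly the details the paper leaves as ``sketched.''
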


\subsection{The Standard Permutation Action and its Dual Group in the Case $n=3$} \label{subsec:Permutations_n=3_Case}
The standard permutation action and its dual group in the case $n=3$ are of particular interest for our present paper. We now work out explicitly this special case of Section~\ref{subsection:Standard_Permutation_Action}. Let $X = (x_1, x_2, x_3)$ denote the pitch-class segment of a trichord. The symmetric group on 3 letters in cycle notation\footnote{We follow the standard cycle notation {\it without commas}. For example, the cycle $(123)$ is the map $1 \mapsto 2 \mapsto 3 \mapsto 1$. Cycles are composed as ordinary functions are. For example, $(123)(23)=(12)$ because we do $(23)$ first and then $(123)$.} is $$\Sigma_3 = \{\text{id}, (123), (132), (23), (13), (12)\}.$$
We obtain $$\Sigma_3X= \left
\{ \begin{array}{rll}
X      & = & (x_1, x_2, x_3) \\
(123)X & = & (x_3, x_1, x_2) \\
(132)X & = & (x_2, x_3, x_1) \\
(23)X  & = & (x_1, x_3, x_2) \\
(13)X  & = & (x_3, x_2, x_1) \\
(12)X  & = & (x_2, x_1, x_3)
\end{array} \right \}.$$ As generators for the actions $\lambda(\Sigma_3)$ and $\rho(\Sigma_3)$ we may choose $\lambda(1 2 3)$, $\lambda(2 3)$ and $\rho(1 2 3)$, $\rho(2 3)$, respectively, which have the following explicit form.
$$\begin{array}{ll}
\lambda(1 2 3):
\begin{array}{rrr}
X        & \mapsto & (1 2 3)X \\
(1 2 3)X & \mapsto & (1 3 2)X \\
(1 3 2)X & \mapsto & X \\
(2 3)X   & \mapsto & (1 2)X \\
(1 3)X   & \mapsto & (2 3)X \\
(1 2)X   & \mapsto & (1 3)X
\end{array},
&
\rho(1 2 3):
\begin{array}{rrr}
X         & \mapsto & (1 3 2)X \\
(1 2 3)X  & \mapsto & X \\
(1 3 2)X  & \mapsto & (1 2 3)X \\
(2 3)X    & \mapsto & (1 2)X \\
(1 3)X    & \mapsto & (2 3)X \\
(1 2)X    & \mapsto & (1 3)X
\end{array} \\
& \\
\lambda(2 3):
\begin{array}{rrr}
X        & \mapsto & (2 3)X \\
(1 2 3)X & \mapsto & (1 3)X \\
(1 3 2)X & \mapsto & (1 2)X \\
(2 3)X   & \mapsto & X \\
(1 3)X   & \mapsto & (1 2 3)X \\
(1 2)X   & \mapsto & (1 3 2)X
\end{array},
&
\rho(2 3):
\begin{array}{rrr}
X         & \mapsto & (2 3)X \\
(1 2 3)X  & \mapsto & (1 2)X \\
(1 3 2)X  & \mapsto & (1 3)X \\
(2 3)X    & \mapsto & X \\
(1 3)X    & \mapsto & (1 3 2)X \\
(1 2)X    & \mapsto & (1 2 3)X
\end{array}
\end{array}$$

We may write these generators more compactly in cycle notation.
$$\begin{array}{lll}
\lambda(1 2 3) & = &\Big(X\;\;\; (1 2 3)X\;\;\; (1 3 2)X \Big) \Big((2 3)X\;\;\; (1 2)X\;\;\; (1 3)X \Big) \\

\lambda(2 3) & = & \Big(X\;\;\; (2 3)X \Big) \Big((1 2 3)X\;\; \;(1 3)X \Big) \Big((1 3 2)X \;\;\; (1 2)X \Big)\\

\rho(1 2 3) & = &\Big (X \;\;\; (1 3 2)X \;\;\; (1 2 3)X \Big) \Big((2 3)X \;\;\; (12)X\;\;\; (1 3)X \Big)\\

\rho(2 3) & = & \Big(X\;\;\; (2 3)X \Big) \Big((1 2 3)X\;\;\; (1 2)X \Big) \Big((1 3 2)X\;\;\; (1 3)X \Big)
\end{array}$$

\subsection{Affine Groups with Permutations and their Duals}

Now consider a pitch-class segment $X = (x_1,  \dots, x_n)$ with $n$ distinct pitch classes $x_k$ and a group $G \subseteq\mathrm{Aff}^{\ast}(\mathbb{Z}_{12})$ of invertible affine transformations. We let $G$ act componentwise on
$n$-tuples, and consider the orbit $GX$ of $X$. We assume, for the sake
of simplicity, that the underlying set of $X$ is not symmetric with
respect to any element of $G$. That is,  we require $f\{x_1,  \dots, x_n\} \neq\{x_1,  \dots, x_n\}$ for all $f \in G$. This condition guarantees that $G$
acts simply transitively on $GX$ and that none of the affine transformations $f \in G$, except the identity transformation, acts on $X$ merely like a permutation. We now extend the action of $\Sigma_n$
on $\Sigma_nX$ to an action on $\Sigma_nGX$.

The group $\Sigma_nG=G\Sigma_n$ is the subgroup of $\text{Sym}\Big((\mathbb{Z}_{12})^n\Big)$ generated by $\Sigma_n$ and $G$. Since $\Sigma_n$ and $\mathrm{Aff}^{\ast}(\mathbb{Z}_{12})$ commute, the group $\Sigma_nG$ is an internal direct product of $\Sigma_n$ and $G$, and every element of $\Sigma_nG$ can be written uniquely as $\sigma g$ with $\sigma \in \Sigma_n$ and $g\in G$. Recall that a group $H$ is an {\it internal direct product of subgroups $K$ and $L$} if $K$ and $L$ commute, $K \cap L = \{e\}$, and every element of $G$ can be written as $k\ell$ for some $k \in K$ and $\ell \in L$.

The orbit of $X$ under $\Sigma_nG$ decomposes as a disjoint union, which gives a principle $\Sigma_n$-bundle over the pitch-class sets underlying the $G$-orbit of $X$.
$$G\Sigma_nX=\underset{g \in G}{\coprod} \Sigma_n(gX)  \xymatrix{ \ar[r] & }G\{x_1, \dots, x_n\} $$
As detailed in Section~\ref{subsection:Standard_Permutation_Action}, on each set $\Sigma_n(gX)$ in the disjoint union we have dual groups $\lambda^g(\Sigma_n)$ and $\rho^g(\Sigma_n)$ in $\text{Sym}(\Sigma_n(gX))$. In light of the disjoint union decomposition, these actions fit together to give commuting, but not dual,\footnote{These two groups cannot be dual, because they do not act simply transitively: their cardinalities are $n!$ while the set upon which they act has cardinality $|G|\cdot n!$.} subgroups of $\text{Sym}(G\Sigma_nX)$. However, these commuting groups form part of dual groups as in the following theorem.

\begin{theorem}[Affine Groups with Permutations and their Duals] \label{thm:dual_groups_with_permutations}
Let $X=(x_1, \dots, x_n)$ be a pitch-class segment in $\mathbb{Z}_{12}$ with $n$ distinct pitch-classes $x_1, \dots, x_n$.  Let $G$ be a subgroup of the group $\mathrm{Aff}^*(\mathbb{Z}_{12})$ of all invertible affine transformations $\mathbb{Z}_{12} \to \mathbb{Z}_{12}$, which acts componentwise on all $n$-tuples in $\mathbb{Z}_{12}$. Suppose $f\{x_1,  \dots, x_n\} \neq\{x_1,  \dots, x_n\}$ for all $f \in G$. Let $\Sigma_n$ denote the symmetric group on $n$ letters, which acts on $n$-tuples as in Section~\ref{subsection:Standard_Permutation_Action}. As above, let $\lambda(\Sigma_nG)$ be the subgroup of $\mathrm{Sym}(\Sigma_nG X)$ determined by the action of the internal direct product $\Sigma_nG$ on the orbit $\Sigma_nGX$. Recall that the dual group $\rho(\Sigma_nG)$ has elements $\rho(\nu h)$ for $\nu \in \Sigma_n$ and $h \in G$
where
$$\rho(\nu h)\sigma g X :=\sigma g \big( \nu h \big)^{-1} X$$
for all $\sigma \in \Sigma_n$ and $g \in G$.

Then:
\begin{enumerate}
\item \label{thm:dual_groups_with_permutations:i}
The restriction of the subgroup $\rho(\Sigma_n)$ to $\Sigma_nX$ is the dual group for $\lambda(\Sigma_n)$ in $\mathrm{Sym}(\Sigma_nX)$, and similarly the restriction of the subgroup $\rho(G)$ to
$GX$ is the dual group for $\lambda(G)$ in $\mathrm{Sym}(GX)$ .
\item \label{thm:dual_groups_with_permutations:ii}
The subgroups $\rho(\Sigma_n)$ and $\rho(G)$ of
$\mathrm{Sym}(\Sigma_nGX)$ commute, that is $\rho(\nu)\rho(h)=\rho(h)\rho(\nu)$ for all $\nu \in \Sigma_n$ and $h \in G$.
\item \label{thm:dual_groups_with_permutations:iii}
The group $\rho(\Sigma_nG)$ is the internal direct sum of $\rho(\Sigma_n)$ and $\rho(G)$.
\item \label{thm:dual_groups_with_permutations:iv}
If $Y \in \sigma GX$ and $h \in G$, then $\rho(h)Y=\sigma\rho(h)\sigma^{-1} Y$.
\end{enumerate}
\end{theorem}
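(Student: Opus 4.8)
The plan is to invoke the finite-case construction, Proposition~\ref{prop:dual_construction}, a single time, applied to the whole internal direct product $\Sigma_nG=\Sigma_n\times G$ acting on $\Sigma_nGX$ with base point $s_0=X$. The hypothesis that $f\{x_1,\dots,x_n\}\neq\{x_1,\dots,x_n\}$ for $f\neq e$ is exactly what makes the fibers $\Sigma_n(gX)$ pairwise disjoint and forces the stabilizer of $X$ in $\Sigma_nG$ to be trivial, so that $\Sigma_nG$ acts simply transitively on $\Sigma_nGX$ (the cardinalities match: $|\Sigma_nG|=|G|\cdot n!=|\Sigma_nGX|$). Proposition~\ref{prop:dual_construction} then delivers in one stroke the dual pair $\lambda(\Sigma_nG)$, $\rho(\Sigma_nG)$ together with the two structural facts I will use throughout: $\rho$ is an \emph{injective homomorphism}, and $\rho(\nu h)$ is the map $\sigma gX\mapsto \sigma g(\nu h)^{-1}X$. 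Each of the four claims is then obtained by reading this single map along one of the two natural decompositions $\Sigma_nGX=\coprod_{g\in G}\Sigma_n(gX)$ and $\Sigma_nGX=\coprod_{\sigma\in\Sigma_n}\sigma(GX)$.

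For part~\ref{thm:dual_groups_with_permutations:i} I would first observe that both $\lambda(\Sigma_n)$ and $\rho(\Sigma_n)$ preserve the fiber $\Sigma_nX$, since $\lambda(\nu)$ sends $\sigma X\mapsto \nu\sigma X$ and $\rho(\nu)$ sends $\sigma X\mapsto\sigma\nu^{-1}X$, both of which lie in $\Sigma_nX$. Restricted to $\Sigma_nX$ these are verbatim the maps $\lambda$ and $\rho$ constructed intrinsically in Section~\ref{subsection:Standard_Permutation_Action}, so the proposition proved there identifies the restrictions as a dual pair in $\mathrm{Sym}(\Sigma_nX)$; the restriction map is injective because $\rho(\Sigma_n)$ already acts simply transitively on the single fiber $\Sigma_nX$. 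The statement for $\rho(G)$, $\lambda(G)$ and the fiber $GX$ is proved identically, now applying Proposition~\ref{prop:dual_construction} to the simply transitive action of $G$ on $GX$ (whose simple transitivity is again guaranteed by the hypothesis on $X$). The only thing demanding care here is bookkeeping which subgroup preserves which fiber: $\rho(G)$ does \emph{not} preserve $\Sigma_nX$, so part~\ref{thm:dual_groups_with_permutations:i} genuinely treats two restrictions to two different fibers, and this fiber-mixing is precisely what part~\ref{thm:dual_groups_with_permutations:iv} will measure.

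Parts~\ref{thm:dual_groups_with_permutations:ii} and~\ref{thm:dual_groups_with_permutations:iii} I would then read off formally. Because $\Sigma_n$ and $G$ commute inside $\Sigma_nG$ and $\rho$ is a homomorphism, $\rho(\nu)\rho(h)=\rho(\nu h)=\rho(h\nu)=\rho(h)\rho(\nu)$, which gives~\ref{thm:dual_groups_with_permutations:ii}. For~\ref{thm:dual_groups_with_permutations:iii}, every element of $\rho(\Sigma_nG)$ factors as $\rho(\nu h)=\rho(\nu)\rho(h)$ with $\nu$ and $h$ uniquely determined by the direct-product decomposition of $\Sigma_nG$, while $\rho(\Sigma_n)\cap\rho(G)=\{\mathrm{id}\}$ follows by transporting $\Sigma_n\cap G=\{e\}$ through the injective $\rho$; together with~\ref{thm:dual_groups_with_permutations:ii} this is exactly the definition of internal direct product recalled before the theorem.

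Finally, part~\ref{thm:dual_groups_with_permutations:iv} is a one-line computation on a fiber, where $\sigma$ denotes $\lambda(\sigma)$. Writing $Y=\sigma gX$ with $g\in G$, the left side is $\rho(h)Y=\sigma gh^{-1}X$, while on the right side $\lambda(\sigma)^{-1}$ carries $Y$ back to $gX\in GX$, then $\rho(h)$ sends $gX$ to $gh^{-1}X$, and $\lambda(\sigma)$ reinstates the prefix to give $\sigma gh^{-1}X$; the two agree. This recovers the assertion from the opening of the section that on the reordering $\sigma X$ the operation $\rho(h)$ acts as the conjugate $\sigma\rho(h)\sigma^{-1}$. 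I expect no step to pose a serious obstacle once Proposition~\ref{prop:dual_construction} is applied to $\Sigma_nG$; the genuine work is organizational, namely tracking the two fibrations in part~\ref{thm:dual_groups_with_permutations:i} and recognizing that the failure of $\rho(G)$ to preserve the permutation-fiber $\Sigma_nX$ is exactly the content that makes part~\ref{thm:dual_groups_with_permutations:iv} a non-vacuous, separate assertion.
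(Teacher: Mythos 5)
Your proposal is correct and takes essentially the same route as the paper: a single application of Proposition~\ref{prop:dual_construction} to the simply transitive action of $\Sigma_nG$ on $\Sigma_nGX$, with parts (ii) and (iii) transported through the embedding $\rho$, part (i) obtained by restricting to the fibers $\Sigma_nX$ and $GX$, and part (iv) amounting to the commutation of $\rho(h)$ with $\lambda(\sigma)$. The paper's proof is simply a terser version of yours (it cites duality for (iv) where you unroll the computation explicitly), so your write-up mainly supplies details the paper leaves implicit.
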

\begin{proof}
Statement \ref{thm:dual_groups_with_permutations:i} follows directly from the construction of the dual group in Section~\ref{subsection:dual_construction}.
Statements \ref{thm:dual_groups_with_permutations:ii} and \ref{thm:dual_groups_with_permutations:iii} follow from the analogous facts about
$\Sigma_n$, $G$, and $\Sigma_nG$ because $\rho$ is an embedding (and consequently an isomorphism onto its image).
Alternatively, we may prove Statement \ref{thm:dual_groups_with_permutations:ii} as follows.
For $\nu \in \Sigma_n$ and $h \in G$ we have
$$
\begin{aligned}
\rho(\nu)\rho(h)\sigma g X & \overset{\text{def}}{=} \sigma g h^{-1} \nu^{-1} X \\
& = \sigma g \nu^{-1} h^{-1} X \\
& \overset{\text{def}}{=} \rho(h)\rho(\nu) \sigma g X,
\end{aligned}
$$
where the unlabeled equality follows from the fact that $\nu^{-1}$ and $h^{-1}$ commute because $\Sigma_n$ and $G$ commute as remarked above.
Statement \ref{thm:dual_groups_with_permutations:iv} follows from the fact that $\rho(h)$ commutes with $\sigma$ and $\sigma^{-1}$ by duality.
\end{proof}

\begin{myexample}[Permutations with $T/I$ and $PLR$-Duality] \label{examp:Permutations_with_T/I-PLR-Duality}
If in Theorem~\ref{thm:dual_groups_with_permutations} we take $X$ to be $(0,4,7)$ and $G$ to be the $T/I$-group, then we have the incorporation of permutations into $T/I$ and $PLR$-duality. In particular, $\Sigma_3 (T/I) (0,4,7)$ is the set of all possible orderings of major and minor triads, and
$\rho(\Sigma_3(T/I))$ is the internal direct product of $\rho(\Sigma_3)$ and the extended $PLR$-group. By part \ref{thm:dual_groups_with_permutations:iv} any operation $h$ of the $PLR$-group is extended to act on $Y =\sigma T_j(0,4,7)$ or $Y= \sigma I_j (0,4,7)$ by first translating back to ``root position,'' then operating, and then translating back, namely $hY:=\sigma h\sigma^{-1}Y$. For instance,
$$R(7,0,4)=(123)R(321)(123)(0,4,7)=(123)(4,0,9)=(9,4,0).$$
Another way to justify this is that the extended $R$ operation commutes with permutations, so
$$R(7,0,4)=R(123)(0,4,7)=(123)R(0,4,7)=(123)(4,0,9)=(9,4,0).$$
Thus, Theorem~\ref{thm:dual_groups_with_permutations}, in combination with the Sub Dual Group Theorem of Fiore--Noll \cite[Theorem~3.1]{fiorenoll2011}, gives a theoretical justification for the constructions at the end of  \cite[Section~5]{fiorenollsatyendraSchoenberg} concerning an analysis of Schoenberg, String Quartet Number 1, Opus 7.
\end{myexample}

\begin{myexample}[Cohn Group] \label{examp:Cohn_group}
We may now define new versions of $P$, $L$, and $R$ which retain the positions of common tones in the ordering of any triad.
Let $P':=\rho(13)P$, $L':= \rho(23)L$, and $R':=\rho(12)R$. Then we have for instance
$$L'(4,7,0)=\rho(23) L  (4,7,0) = L \rho(23) (321) (0,4,7)=$$
$$L (13) (0,4,7)=(13)L(0,4,7)=(13)(11,7,4)=(4,7,11)$$
by the table for $\rho(23)$ in Section~\ref{subsec:Permutations_n=3_Case}. See Figure~\ref{fig: CakeCognac} for further examples. We call the group generated by $P'$, $L'$, $R'$ the {\it Cohn group}. It is dihedral of order 24 (the relations can be checked directly using those of the $PLR$-group and the commutativity of $\rho(\Sigma_3)$ with the $PLR$-group).
\end{myexample}

\begin{myexample}[Venezia] Below we have a rhythmic reduction of Liszt, R. W. Venezia, S. 201, measures 31--42. The transformations in each of the three phrases are permutations, $P$, and $R$ operations, as pictured in the rows of the subsequent network. The vertical arrows of the network indicate that the three phrases are related by transposition by 3 semitones. All the squares commute by Theorem~\ref{thm:dual_groups_with_permutations}, since the four groups $\lambda(\Sigma_n)$, $\lambda(T/I)$, $\rho(\Sigma_n)$, and $\rho(T/I)=PLR$ commute.
\begin{center}
\includegraphics[width=13cm]{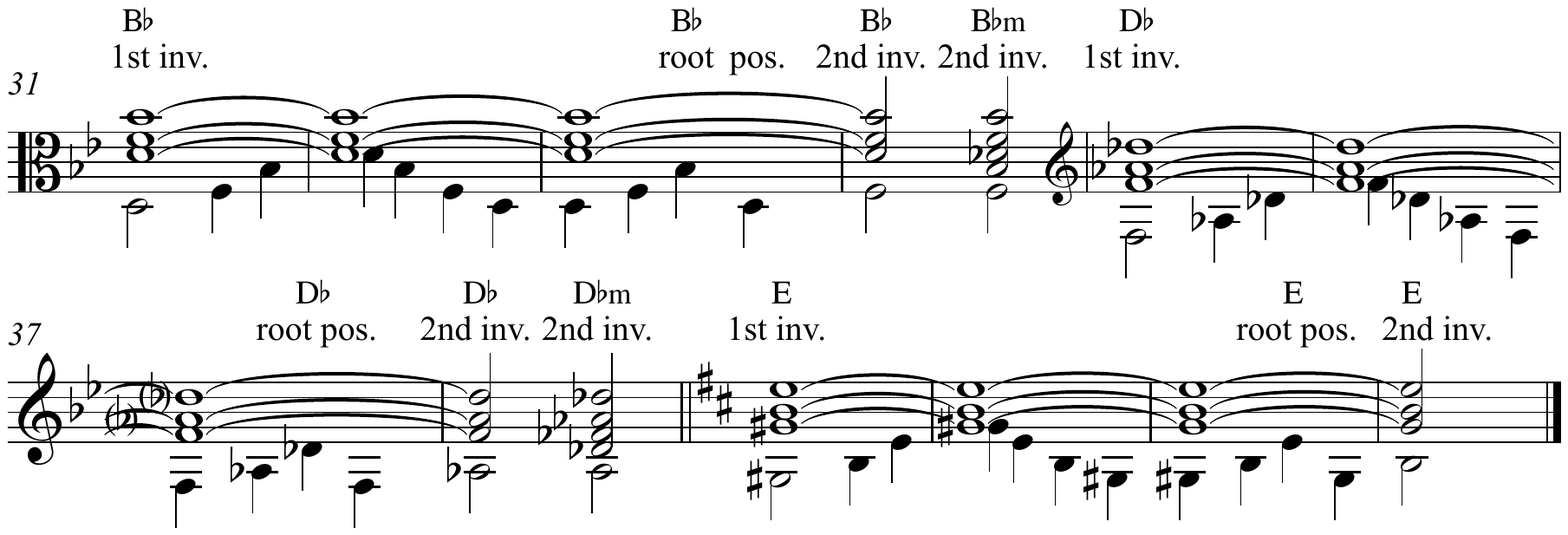}
\end{center}
{\footnotesize
$$
\renewcommand{\labelstyle}{\textstyle}
\entrymodifiers={+[F-]}
\xymatrix@C=3.4pc@R=2.8pc{\txt{$(2,5,10)$ \\ $B\flat$ 1st inv} \ar[r]^-{(123)} \ar[d]_{T_3} & \txt{$(10,2,5)$ \\ $B\flat$ root pos} \ar[r]^-{(13)}  \ar[d]_{T_3} & \txt{$(5,2,10)$ \\
$B\flat$ 2nd inv op} \ar[r]^-{(123)P} \ar[d]_{T_3} & \txt{$(5,10,1)$ \\ $B\flat m$ 2nd inv} \ar[r]^-{(13)R} \ar[d]_{T_3} & \txt{$(5,8,1)$ \\ $D\flat$ 1st inv} \ar[d]_{T_3}
\\
\txt{$(5,8,1)$ \\ $D\flat$ 1st inv} \ar[r]^-{(123)} \ar[d]_{T_3} & \txt{$(1,5,8)$ \\ $D\flat$ root pos} \ar[r]^-{(13)} \ar[d]_{T_3} & \txt{$(8,5,1)$ \\
$D\flat$ 2nd inv op} \ar[r]^-{(123)P} \ar[d]_{T_3} & \txt{$(8,1,4)$ \\ $D\flat m$ 2nd inv} \ar[r]^-{(13)R}  & \txt{$(8,11,4)$ \\ $E$ 1st inv}
\\
\txt{$(8,11,4)$ \\ $E$ 1st inv} \ar[r]^-{(123)} & \txt{$(4,8,11)$ \\ $E$ root pos} \ar[r]^-{(13)}  & \txt{$(11,8,4)$ \\
$E$ 2nd inv op} & *{} & *{} }$$
}

\end{myexample}

\section{Properties of Other Contextual Transformations on Pitch-Class Segments not Contained in $\rho(\Sigma_3(T/I))=\rho(\Sigma_3)PLR$}

The remainder of this paper illustrates some properties of {\it contextual inversion enchaining transformations}. These are certain transformations on pitch-class segments not contained in the dual group $\rho(\Sigma_n(T/I))$. In particular, we will discuss the $\mathrm{RICH}$-transformation, which goes beyond the scope of simply transitive actions as well as beyond the orbifold-construction via voice-permutation.

Consider the situation and notation of Theorem~\ref{thm:dual_groups_with_permutations}, and for $1 \leq q, r \leq n$ consider the globally defined {\it contextual inversion}\footnote{As we remarked earlier, the formulas in  equation \eqref{equ:PLR_root_position} for $P$, $L$, and $R$ are only valid for major triads in root position, or minor triads in the ordering $I_n(0,4,7)$. For other orderings of consonant triads, conjugation must be used, as in Example~\ref{examp:Permutations_with_T/I-PLR-Duality}. Thus, $J^{1,3}$, $J^{2,3}$, and $J^{1,2}$ do not agree with $P$, $L$, and $R$ beyond the $T/I$-class of $(0,4,7)$, and the name ``contextual inversion'' for $J^{q,r}$ it not optimal.}
\begin{equation} \label{equ:contextual_inversion}
J^{q,r}(Y):= I_{y_q+y_r}Y.
\end{equation}
Composites of contextual inversions with permutations yield instances of {\it contextual inversion enchaining transformations}. Within the symmetric group $\Sigma_n$, consider the order 2 cycle\footnote{Of course, an order 2 cycle is more commonly called a ``transposition'' in the mathematics literature, but we avoid using that term here because ``transposition'' already has other meanings in this article.} $(r \; s)$. On pitch-class segments $(y_1, \dots , y_n)$, the permutation $(r \; s)$ acts through voice exchange by mutually exchanging the pitch classes $y_r$ and $y_s$ at their respective positions in $(y_1, \dots, y_n)$. $$(r \; s)\co (y_1, \dots , y_r, \dots, y_s, \dots, y_n) \mapsto (y_1, \dots , y_s, \dots, y_r, \dots, y_n)$$

\begin{mydefinition}
Consider a pitch-class segment $X = (x_1, \dots, x_n)$ and select three distinct indices $1 \le q, r, s \le n$. A {\it contextual inversion enchaining transformation}
is any composite
$$(r \; s) \circ J^{q, r}\co \Sigma_n(T/I)X \to \Sigma_n(T/I)X$$ of a contextual inversion $J^{q, r}$ and a voice exchange $(r \; s)$ sharing the common index $r$.
\end{mydefinition}

The effect of enchaining will be illustrated by example. For $n = 3$ the cycle $(1\;3)$ behaves like a retrograde, which motivates Lewin's notation $\mathrm{RICH}$ in \cite{LewinGMIT} for the transformation $(1\;3) \circ J^{2, 3}$, meaning {\it retrograde inversion enchaining}. If $Y$ is a pitch-class segment, then $\text{RICH}(Y)$ is that retrograde inversion of $Y$ which has the first two notes $y_2$ and $y_3$, in that order.
This transformation was used in our analysis of Schoenberg in \cite{fiorenollsatyendraSchoenberg}.

The explicit cycle notation of the $\mathrm{RICH}$ transformation on consonant triads is displayed in Figure~\ref{fig:RICH_in_cycles}.
\begin{figure}
\caption{Cycle decomposition of $\mathrm{RICH}$ action on all 144 permutations of the major and minor triads}
\label{fig:RICH_in_cycles}
\begin{tabular}{|c|l|}
\hline {\bf Type} & {\bf Consonant Triad Cycles for $\mathrm{RICH}$} \\
\hline \hline
$RL$ & (4, 7, 11) (7, 11, 2) (11, 2, 6) (2, 6, 9) (6, 9, 1) (9, 1, 4) (1, 4,  8) (4, 8, 11) \\
& (8, 11, 3) (11, 3, 6) (3, 6, 10) (6, 10, 1) (10, 1, 5) (1, 5, 8) (5, 8, 0) (8, 0, 3) \\
& (0, 3, 7) (3, 7, 10) (7, 10, 2) (10, 2, 5) (2, 5, 9) (5, 9, 0) (9, 0, 4) (0, 4, 7)\\
\hline
$RL$ & (4, 0, 9) (0, 9, 5) (9, 5, 2) (5, 2, 10) (2, 10, 7) (10, 7, 3) (7, 3, 0) (3, 0, 8) \\
& (0, 8, 5) (8, 5, 1) (5, 1, 10) (1, 10, 6) (10, 6, 3) (6, 3, 11) (3, 11, 8) (11, 8, 4) \\
& (8, 4, 1) (4, 1, 9) (1, 9, 6) (9, 6, 2) (6, 2, 11) (2, 11, 7) (11, 7, 4) (7, 4, 0) \\
\hline \hline
$PR$ &  (0, 7, 3) (7, 3, 10) (3, 10, 6) (10, 6, 1) (6, 1, 9) (1, 9, 4) (9, 4, 0) (4, 0, 7)\\
\hline
$PR$ & (0, 4, 9) (4, 9, 1) (9, 1, 6) (1, 6, 10) (6, 10, 3) (10, 3, 7) (3, 7, 0) (7, 0, 4)\\
\hline
$PR$ & (1, 8, 4) (8, 4, 11) (4, 11, 7) (11, 7, 2) (7, 2, 10) (2, 10, 5) (10, 5, 1) (5, 1, 8)\\
\hline
$PR$ & (1, 5, 10) (5, 10, 2) (10, 2, 7) (2, 7, 11) (7, 11, 4) (11, 4, 8) (4, 8, 1) (8, 1, 5)\\
\hline
$PR$ & (2, 9, 5) (9, 5, 0) (5, 0, 8) (0, 8, 3) (8, 3, 11) (3, 11, 6) (11, 6, 2) (6, 2, 9)\\
\hline
$PR$ & (2, 6, 11) (6, 11, 3) (11, 3, 8) (3, 8, 0) (8, 0, 5) (0, 5, 9) (5, 9, 2) (9, 2, 6)\\
\hline \hline
$PL$ & (7, 4, 11) (4, 11, 8) (11, 8, 3) (8, 3, 0) (3, 0, 7) (0, 7, 4)\\
\hline
$PL$ & (7, 0, 3) (0, 3, 8) (3, 8, 11) (8, 11, 4) (11, 4, 7) (4, 7, 0)\\
\hline
$PL$ & (8, 5, 0) (5, 0, 9) (0, 9, 4) (9, 4, 1) (4, 1, 8) (1, 8, 5)\\
\hline
$PL$ & (8, 1, 4) (1, 4, 9) (4, 9, 0) (9, 0, 5) (0, 5, 8) (5, 8, 1)\\
\hline
$PL$ & (9, 6, 1) (6, 1, 10) (1, 10, 5) (10, 5, 2) (5, 2, 9) (2, 9, 6)\\
\hline
$PL$ & (9, 2, 5) (2, 5, 10) (5, 10, 1) (10, 1, 6) (1, 6, 9) (6, 9, 2)\\
\hline
$PL$ & (10, 7, 2) (7, 2, 11) (2, 11, 6) (11, 6, 3) (6, 3, 10) (3, 10, 7)\\
\hline
$PL$ & (10, 3, 6) (3, 6, 11) (6, 11, 2) (11, 2, 7) (2, 7, 10) (7, 10, 3) \\
\hline
\end{tabular}
\end{figure}
More specifically, in Theorem~\ref{thm:dual_groups_with_permutations}, we take $X$ to be $(0,4,7)$ and $G$ to be the $T/I$-group, so that $\Sigma_3(T/I)(0,4,7)$ is the $144=6\times 24$ possible orderings of major and minor triads, and $\rho(\Sigma_3(T/I))$ is the internal direct product of $\rho(\Sigma_3)$ and the $PLR$-group. The group $\rho(\Sigma_3(T/I))$ is also the subgroup of $\mathrm{Sym}(\Sigma_3(T/I))$ generated by $\rho(\Sigma_3)$ and the $PLR$-group. But $\mathrm{RICH}$ is not in the simply transitive group $\rho(\Sigma_3(T/I))$ as we now explain.

A close look at the cycle decomposition of $\mathrm{RICH}$ shows that there are cycles of length 24, behaving like $RL$-cycles, cycles of length 8, behaving like $PR$-cycles, and cycles of length 6, behaving like $PL$-cycles. Consequently the sixth and eighth powers $\mathrm{RICH}^6$ and $\mathrm{RICH}^8$ have fixed points, and $\mathrm{RICH}$ cannot be part of a simply transitive group action on all 144 ordered triads. In application to suitable subsets of $\Sigma_3(T/I)X$, e.g. to selected pitch-class segments in an octatonic cycle, the fixed-point effect disappears, and $\mathrm{RICH}$ can be part of a simply transitive group action on those. The first two $PR$-cycles in Figure~\ref{fig:RICH_in_cycles} involve 16 triadic pitch-class segments over the octatonic scale $\{0, 2, 3, 4, 6, 7, 9, 10\}$.

\smallskip

\begin{tabular}{|c|l|}
\hline
$PR$ & (0, 7, 3) (7, 3, 10) (3, 10, 6) (10, 6, 1) (6, 1, 9) (1, 9, 4) (9, 4, 0) (4, 0, 7)
\\ \hline
$PR$ & (1, 6, 10) (6, 10, 3) (10, 3, 7) (3, 7, 0) (7, 0, 4) (0, 4, 9) (4, 9, 1) (9, 1, 6) \\
\hline
\end{tabular}
\smallskip

\noindent The second one is precisely the $PR$-cycle in measures 88--92 of Schoenberg, String Quartet Number 1, Opus 7 pictured in
\cite[Figures~1 and 2]{fiorenollsatyendraSchoenberg}. This octatonically restricted $\mathrm{RICH}$-transformation involves two (and only two) Flip-Flop Cycles of length $8$ in the sense of John Clough \cite{cloughFlipFlop}. Analogous orbits can be obtained for pitch-class segments of jet and shark triads in \cite{fiorenollsatyendraSchoenberg}. The last $PR$-cycle in Figure~\ref{fig:RICH_in_cycles} contains the cello motive in measures 8--10, which is pictured in \cite[Figures~12 and 13]{fiorenollsatyendraSchoenberg},
and located in the octatonic scale
$\{2,3,5,6,8,9,11,0\}.$
See also the Summary Network in \cite[Figure~14]{fiorenollsatyendraSchoenberg}.


\end{document}